\numberwithin{equation}{section}
\newtheorem{theorem}{Theorem}[section]
\newtheorem*{theorem*}{Theorem}
\newtheorem{definition}[theorem]{Definition}
\newtheorem{example}[theorem]{Examples}
\newtheorem{remark}[theorem]{Remark}
\newtheorem{lemma}[theorem]{Lemma}
\newtheorem{corollary}[theorem]{Corollary}
\newcommand{\ds}{\displaystyle}
\newcommand{\R}{{\mathbb R}}
\newcommand{\N}{{\mathbb N}}  
\newcommand{\set}[1]{\{ #1 \}}
\newcommand{\ddiff}{\nabla}
\newcommand{\norm}[1]{\lVert #1 \rVert}
\newcommand{\diff}{{\rm d}}
\newcommand{\dist}{{\rm dist}}
\def\dist{{\rm dist}}
\title[Graphs in the sense of John]{Weighted graphs in the sense of John and\\  a global Poincar\'e inequality}
\date{\today}
\author[F. L\'opez-Garc\'\i a]{Fernando L\'opez-Garc\'\i a}
\address{Department of Mathematics and Statistics\\ California State Polytechnic University Pomona\\ Pomona, CA 91768, US}
\email{fal@cpp.edu}
\author[J. Rodriguez]{John Rodriguez}
\address{Department of Mathematics, University of Washington\\ Seattle, WA 98195, US}
\email{jrodri7@uw.edu}
\begin{document}

\begin{abstract}
In this paper, we establish a condition on weighted graphs with finite measure that guarantees the validity of a global Poincar\'e inequality. This condition can be viewed as a discrete analogue of the criterion introduced by J.~Boman in 1982 for Whitney cubes, which in turn characterizes the condition originally proposed by F.~John in his seminal 1961 work.
\end{abstract}

\keywords{Analysis on weighted graphs, John domains, Poincar\'e inequality, local-to-global, trees.}

\subjclass[2020]{Primary: 26D15; Secondary: 05C75, 05C22, 05C63, 05C05}

\maketitle

\section{Introduction}

The class of John domains in $\R^n$ is the largest class of domains, under very general assumptions, for which the improved Poincar\'e inequality and the Sobolev-Poincar\'e inequality hold, together with many related results. For details, we refer the reader to \cite{Hs, ADM, BK, Lg3} and the references therein. This class of domains was introduced by Fritz John in 1961 (see \cite{J}) via the so-called twisted cone condition, which asserts that any point $x$ in the domain can be connected to a fixed base point $x_0$ by a path such that, at any point along the path, the distance to the boundary is comparable to the distance traveled from $x$. In 1982, J. Boman provided in \cite{B1} an equivalent formulation in terms of chains of Whitney cubes. Later, the first author of this manuscript introduced in \cite{Lg2} another equivalent definition of John domains, similar to that in \cite{B1}, where chains are replaced by a tree structure. In this paper, we adapt the notion from \cite{Lg2} to weighted graphs. This formulation will be developed in Section \ref{section: Decomposition}; however, for convenience, we include the definition here. Given a connected graph $G=(V,E)$, equipped with a positive vertex function $\mu:V\to\R$, we are interested in graphs for which there exists a rooted spanning tree $(V,E_T)$ of $G$ (i.e. a connected and acyclic subgraph that contains every vertex in $V$ with a distinguished element $a$ as its root) and a constant $c\geq 1$ such that 
\begin{equation}
\label{intro: John graph}
    \mu(S_t) \le c\mu(t) \quad \text{for all } t \in V,
\end{equation}
where $S_t \coloneqq \set{s \in V : s\succeq t}$ is the \emph{shadow} of $t$. In this definition, we say that $s\succeq t$ if the unique path in the spanning tree from $s$ to the root $a$ contains $t$.

Following the Euclidean case, we show that this condition implies a global version of the Poincar\'e inequality. This implication is obtained through a \emph{local-to-global} argument studied in \cite{Lg}, which is based on decomposition of functions. In addition, we provide an estimate for the constant appearing in the Poincar\'e inequality in terms of the constant in \eqref{intro: John graph}. These estimates are of particular interest for bounding the first nonzero eigenvalue of the Laplacian. This relationship is well known in both continuous and discrete settings; see \cite{G,C} and references therein for details.

The notion of derivatives we use here is standard and defined as follows. Given a function $f:V\to \R$, the \emph{length of the gradient} of $f$ is the map $|\ddiff f| : V \to \R_{\ge 0}$ given by
    \begin{equation*}
        |\ddiff f|(t) = \sum_{s : s\sim t} |f(t)-f(s)| \quad \text{for all } t\in V,
    \end{equation*}
where $s\sim t$ indicates that the vertices $s$ and $t$ are adjacent in the graph $G=(V,E)$.

As a slight abuse of notation, we write $(V,\mu)$ for the weighted graph $(V,E,\mu)$.

Our main result is the following:

\begin{theorem*}
    Let $(V,\mu)$ be a weighted graph with bounded degree that satisfies the condition in Definition \ref{def: John domain}. Also, let $p\in [1,\infty)$. Then, there exists a constant $C_P>0$ such that the inequality
    \begin{equation*}
        \norm{f}_{\ell^p(V,\mu)} \le C_P \norm{|\ddiff f|}_{\ell^p(V,\mu)}
    \end{equation*}
    holds for any $f\in \ell^p(V,\mu)$ that sums zero with respect to $\mu$ on $V$. 
\end{theorem*}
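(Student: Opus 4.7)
The plan is to follow the local-to-global scheme of \cite{Lg}, with the rooted spanning tree playing the role of a Whitney chain. First I would build an edge-wise decomposition of $f$. For every non-root vertex $t\in V$, set $\alpha_t = -\sum_{s\in S_t} f(s)\mu(s)$ and define $f_t\colon V\to\R$ by $f_t(t) = -\alpha_t/\mu(t)$, $f_t(\pi(t)) = \alpha_t/\mu(\pi(t))$, and $f_t\equiv 0$ elsewhere (here $\pi(t)$ denotes the tree-parent of $t$). Each $f_t$ is supported on the single tree-edge $\{t,\pi(t)\}$ and has zero $\mu$-integral, and the identity $\sum_{t\neq a} f_t=f$ holds pointwise as a direct consequence of the mean-zero hypothesis $\sum_V f\mu=0$.

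Second, a vertex $v\in V$ lies in the support of at most $1+\deg_T(v)$ of the pieces $f_t$ (namely $t=v$ and $t$ running over the tree-children of $v$), so the uniform bound $D$ on the tree degree together with Jensen's inequality yield
\[
\norm{f}_{\ell^p(V,\mu)}^p \le (1+D)^{p-1}\sum_{t\neq a}\norm{f_t}_{\ell^p(V,\mu)}^p = (1+D)^{p-1}\sum_{t\neq a}|\alpha_t|^p\bigl(\mu(t)^{1-p}+\mu(\pi(t))^{1-p}\bigr).
\]
The John condition $\mu(S_t)\le c\mu(t)$ and the inclusion $S_t\subseteq S_{\pi(t)}$ (so $\mu(S_t)\le c\mu(\pi(t))$) reduce the right-hand side to a constant multiple of $\sum_{t\neq a}|\alpha_t|^p\mu(S_t)^{1-p} = \sum_{t\neq a}\mu(S_t)|m_t|^p$, where $m_t := \mu(S_t)^{-1}\sum_{s\in S_t}f(s)\mu(s)$ is the shadow-average of $f$.

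The main remaining estimate is then $\sum_{t\neq a}\mu(S_t)|m_t|^p\lesssim \norm{|\ddiff f|}_{\ell^p(V,\mu)}^p$. I would telescope $m_t=\sum_{u:\,a\prec u\preceq t}(m_u-m_{\pi(u)})$ along the tree path from $a$ to $t$, bound each increment $|m_u-m_{\pi(u)}|$ by a weighted average of $|\ddiff f|$ on $S_{\pi(u)}$ (via a local Poincar\'e-type estimate on shadows that itself uses $\mu(S_u)\le c\mu(u)$), and package the result into a weighted discrete Hardy inequality on the tree. The hard part will be precisely this Hardy step: the naive swap-and-sum introduces a factor equal to the depth of a vertex, which is unbounded, so the John condition must be used quantitatively. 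The relevant quantitative consequence is the geometric decay $\mu(S_{v_{i+1}})\le(1-1/c)\mu(S_{v_i})$ along any descending tree-chain $v_0=a,v_1,v_2,\dots$ (which follows from $\mu(S_{v_i})\ge \mu(v_i)+\mu(S_{v_{i+1}})$ and $\mu(v_i)\ge\mu(S_{v_i})/c$); combined with the bounded degree $D$, this validates the Hardy inequality and produces a quantitative constant $C_P=C(c,D,p)$.
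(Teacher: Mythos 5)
Your decomposition $\set{f_t}$ is exactly the one the paper uses (Definition \ref{def: decomp of f}), and your reduction of $\norm{f}_{\ell^p(V,\mu)}^p$ to a constant times $\sum_{t\ne a}\mu(S_t)|m_t|^p$ via bounded overlap of the supports and the John condition is correct. The gap is the final step: the estimate $\sum_{t\ne a}\mu(S_t)|m_t|^p\lesssim\norm{|\ddiff f|}_{\ell^p(V,\mu)}^p$ carries the entire content of the theorem and is only sketched. Worse, the sketch as described is circular: the proposed bound on each increment $|m_u-m_{\pi(u)}|$ by ``a weighted average of $|\ddiff f|$ on $S_{\pi(u)}$ via a local Poincar\'e-type estimate on shadows'' is itself a Poincar\'e inequality on the subgraph $S_{\pi(u)}$ --- a shadow inherits condition \eqref{eq: John constant} with the same constant, so that local estimate is an instance of the statement being proven. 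The naive alternative $|m_u-m_{\pi(u)}|\le\frac{\mu(S_{\pi(u)})}{\mu(S_u)}\cdot\frac{1}{\mu(S_{\pi(u)})}\sum_{s\in S_{\pi(u)}}|f(s)-m_{\pi(u)}|\mu(s)$ does not help, since the ratio $\mu(S_{\pi(u)})/\mu(S_u)$ is not controlled by $c$ (consider a star with one very light leaf). Your geometric-decay observation $\mu(S_{v_{i+1}})\le(1-1/c)\mu(S_{v_i})$ is correct and is a genuine quantitative consequence of \eqref{eq: John constant}, but it only addresses the summability of the chaining, not the increment bound, so it does not close the argument.

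The paper sidesteps this entirely by dualizing \emph{before} decomposing: it writes $\norm{f}_{\ell^p(V,\mu)}=\sup_{\norm{g}_{\ell^q(V,\mu)}\le 1}\sum_s f(s)(g(s)-g_V)\mu(s)$ and applies the decomposition to $g-g_V$ rather than to $f$. The gradient then enters only through the trivial one-edge Poincar\'e inequality (Lemma \ref{lem: local poincare}), while the John condition and the degree bound are used solely to prove $\sum_t\norm{g_t}_{\ell^q(V,\mu)}^q\lesssim\norm{g}_{\ell^q(V,\mu)}^q$ (Corollary \ref{cor: sum||f_t||<=K||f||}); that bound reduces to the $\ell^q$-boundedness of the averaging operator $Tg(t)=\mu(S_t)^{-1}\sum_{s\succeq t}|g(s)|\mu(s)$, obtained by Marcinkiewicz interpolation between the trivial $(\infty,\infty)$ bound and a weak $(1,1)$ bound (Theorems \ref{thm: Hardy strong (infty,infty) cont}--\ref{thm:Hardy strong (p,p) cont}), and involves no gradients at all. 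If you want to keep your primal-side approach you must prove the shadow-average inequality $\sum_t\mu(S_t)|m_t|^p\lesssim\norm{|\ddiff f|}_{\ell^p(V,\mu)}^p$ from scratch; as it stands, your proposal assumes it.
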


The paper is organized as follows. In Section~\ref{section: Hardy}, we study the boundedness of a Hardy-type operator, which plays a central role in our approach. Section~\ref{section: Decomposition} is devoted to a function decomposition that allows us to pass the validity of the Poincar\'e inequality on edges to the entire weighted graph. Finally, in Section~\ref{section: Poincare}, we establish the global Poincar\'e inequality.

\section{Some basic definitions and an averaging operator on Graphs with Rooted Spanning Trees}\label{section: Hardy}

Let us recall some basic and well-known definitions on graphs and introduce an averaging operator that we use later.

A graph $G$ is a pair $(V, E)$, where $V$ is a set of \emph{vertices}, i.e., an arbitrary set whose elements are called vertices, and $E$ is a set of \emph{edges} where the elements of $E$ are unordered pairs $(x, y)$ of vertices $x,\, y \in V$. We write $x \sim y$ if $(x, y) \in E$ and say that $x$ is \emph{adjacent} to $y$. The edge $(x, y)$  is also denoted by $xy$, and $x, y$ are called the \emph{endpoints} of this edge. In this work, we assume that the vertex set $V$ is countable. 

A \emph{chain} in $G$ is a finite sequence of vertices $t_0, t_1, \dots, t_n$ such that $t_i$ is adjacent to $t_{i-1}$ for any $1 \leq i\leq n$. The graph $G=(V,E)$ is \emph{connected} if for any $s,t\in V$ there exists a chain connecting $s$ to $t$. 

A graph $G$ is called \emph{locally finite} if every vertex $x$ has finitely many adjacent vertices. The \emph{degree} of a vertex $x$, denoted $\deg(x)$, is the number of edges incident to $x$. We say that $G$ has \emph{bounded degree} if there exists an integer $M$ such that $\deg(x)\leq M$ for all $x\in V$. A graph $G$ is called \emph{finite} if the number of its vertices is finite.

A graph is a \emph{tree} if it is connected and has no cycles. A \emph{rooted tree} is a tree with a distinguished vertex $a$, called the \emph{root}. If $G$ is a rooted tree, we can define a partial order $\succeq$ on $V$ by declaring that $s \succeq t$ if the unique path from $s$ to the root $a$ contains $t$. We write $s \succ t$ if $s \succeq t$ and $s\neq t$. 

A vertex $k$ is the \emph{parent} of a vertex $t$ if $t \succ k$ and $k$ is adjacent to $t$. For simplicity, we write $k=t_p$. Analogously, $t$ is a \emph{child} of $k$ if $k=t_p$. 

Finally, a \emph{spanning tree} of a connected graph $G=(V,E)$ is a minimal connected subgraph of $G$ that includes all vertices and has no cycles. We use the following notation: 
\begin{equation}
\label{def: spanning tree}
G_T:=(V,E_T),
\end{equation}
where $E_T\subseteq E$ is the edge set of the spanning tree. In this case, we say $x \sim_T y$ if the pair $(x, y) \in E_T$. Naturally, if $x \sim_T y$ then $x \sim y$.

A weighted graph is a graph $G=(V,E)$ provided with a positive function $\mu:V\to\R$. By abuse of notation, we denote weighted graphs by $G=(V,\mu)$. We say that $(V,\mu)$ has \emph{finite measure} if 
\begin{equation*}
\mu(V)=\sum_{s\in V} \mu(s)<\infty. 
\end{equation*}
Furthermore, let $\Omega\subseteq V$. Then a function $f : V \to \R$ \emph{sums zero with respect to $\mu$ on $\Omega$} if 
\begin{equation*}
        \sum_{s\in \Omega} f(s)\mu(s)=0.
\end{equation*}

The space $\ell^q(V, \mu)$, or simply $\ell^q(V)$, for $1 \leq q < \infty$, consists of all functions $f : V \to \R$ such that
\begin{equation*}
\| f \|_q := \left( \sum_{s \in V} |f(s)|^q \mu(s) \right)^{1/q} < \infty.
\end{equation*}
For $q = \infty$, the space $\ell^\infty(V)$ is defined as the set of all bounded functions $f : V \to \mathbb{C}$, with norm
\begin{equation*}
\| f \|_\infty := \sup_{s \in V} |f(s)|.
\end{equation*}
Notice that in weighted graphs with finite measure $\ell^q(V)\subseteq \ell^1(V)$, for any $1 \leq q \leq \infty$.  

Finally, let us define the following averaging operator; also known in this manuscript as a Hardy-type operator.
\begin{definition}
\label{def: Hardy op}
Let $(V,E,\mu)$ be a weighted, connected graph with finite measure and a distinguished spanning tree $(V,E_T)$ that induces a partial order $\succeq$ on the vertices. We define the following \emph{Hardy-type operator} $T$ on $\ell^1(V,\mu)$ 
    \begin{equation*}
        Tf(t) \coloneqq  \frac{1}{\mu(S_t)}\sum_{s:s \succeq t} |f(s)|\mu(s),
    \end{equation*}
for all $t\in V$, where $S_t \coloneqq \set{s \in V : s\succeq t}$ is called the \emph{shadow} of the vertex $t$.
\end{definition}

\begin{remark}
Notice that this averaging operator $T$ depends on the chosen spanning tree.
\end{remark}

The following theorems state that the operator $T$ defined above is strong $(\infty, \infty)$ and weak $(1,1)$ bounded. We then prove the strong $(q,q)$ boundedness of $T$, for $1<q<\infty$, using the Marcinkiewicz interpolation Theorem (see \cite[Theorem 1.3.1]{BL} and \cite{N}.

\begin{theorem}
\label{thm: Hardy strong (infty,infty) cont}
The Hardy-type operator $T$ in Definition \ref{def: Hardy op} is strong $(\infty, \infty)$ bounded, with constant $C=1$. Namely, 
\begin{equation*}
    \norm{Tf}_{\ell^\infty(V)} \le \norm{f}_{\ell^\infty(V)},
\end{equation*}
for any $f\in \ell^\infty(V)$.
\end{theorem}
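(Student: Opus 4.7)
The plan is to show the inequality pointwise for each vertex $t\in V$ and then take the supremum. The operator $T$ is essentially a weighted average of $|f|$ over the shadow $S_t$, so once we recognize it as such, the $(\infty,\infty)$ bound with constant $1$ should be immediate from the fact that a weighted average never exceeds the supremum.

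Concretely, I would start by fixing $f\in \ell^\infty(V)$ and an arbitrary $t\in V$. Since $|f(s)|\le \norm{f}_{\ell^\infty(V)}$ for every $s\in V$, and in particular for every $s\succeq t$, I would bound
\begin{equation*}
    Tf(t) = \frac{1}{\mu(S_t)}\sum_{s \succeq t} |f(s)|\mu(s) \le \frac{\norm{f}_{\ell^\infty(V)}}{\mu(S_t)} \sum_{s \succeq t} \mu(s) = \norm{f}_{\ell^\infty(V)},
\end{equation*}
using that $\sum_{s\succeq t}\mu(s)=\mu(S_t)$ by definition of the shadow. Taking the supremum over $t\in V$ then yields $\norm{Tf}_{\ell^\infty(V)}\le \norm{f}_{\ell^\infty(V)}$.

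There are essentially no obstacles. The only points worth briefly checking are that $\mu(S_t)$ is finite (which follows from the finite measure assumption $\mu(V)<\infty$) and strictly positive (which holds because $t\in S_t$ and $\mu$ is a positive function), so that the quotient defining $Tf(t)$ makes sense. If $\norm{f}_{\ell^\infty(V)}=\infty$ the inequality is trivial, so one may assume it is finite without loss of generality.
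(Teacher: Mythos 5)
Your proof is correct and follows essentially the same argument as the paper: bound $|f(s)|$ by $\norm{f}_{\ell^\infty(V)}$ inside the sum, note that the remaining weighted sum over the shadow cancels with $\mu(S_t)$, and take the supremum over $t$. The added remarks on finiteness and positivity of $\mu(S_t)$ are harmless and consistent with the paper's standing assumptions.
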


\begin{proof}
This result follows from the fact that $T$ is an averaging operator. Indeed, given a function $f\in\ell^\infty(V)$, we acquire the following
    \begin{align*}
        \norm{Tf}_{\ell^\infty(V)} &= \sup_{t \in V} |Tf(t)| \\ 
        &= \sup_{t \in V} \frac{1}{\mu(S_t)} \sum_{s \succeq t} |f(s)|\mu(s) \\
        &\le \norm{f}_{\ell^\infty(V)} \sup_{t \in V} \frac{1}{\mu(S_t)}\sum_{s \succeq t} \mu(s) \\
        &= \norm{f}_{\ell^\infty(V)} \sup_{t \in V} \frac{\mu(S_t)}{\mu(S_t)} \\
        &= \norm{f}_{\ell^\infty(V)}.
    \end{align*}
    Therefore, $T$ is bounded with $\norm{T}_{\ell^\infty(V)\to\ell^\infty(V)}\le 1$, and thus strong $(\infty, \infty)$ bounded.
\end{proof}

We proceed by proving the following lemma, which is used in the argument to conclude the weak $(1,1)$ boundedness of $T$. 

\begin{lemma}
    \label{lem: Hardy weak (1,1) cont}
    Let $G=(V,E)$ be a graph with a rooted spanning tree $(V,E_T)$ that induces on $V$ the partial order $\succeq$.  Let $t_1, t_2 \in V$. If $S_{t_1} \cap S_{t_2} \ne \emptyset$, then either $S_{t_1} \subseteq S_{t_2}$ or $S_{t_2} \subseteq S_{t_1}$. 
    
\end{lemma}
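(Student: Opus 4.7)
The plan is to extract an element from the intersection and use the uniqueness of paths in the spanning tree to force the two shadows into a nested relationship.

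More precisely, I would fix $r \in S_{t_1} \cap S_{t_2}$ and consider the unique path $\gamma$ in $(V,E_T)$ from $r$ to the root $a$. By definition of shadow, both $t_1$ and $t_2$ lie on $\gamma$. Since $\gamma$ is a sequence of vertices, one of these two vertices must lie between the other and $a$ along $\gamma$; without loss of generality, assume $t_2$ lies on the sub-path of $\gamma$ from $t_1$ to $a$. By the uniqueness of simple paths in the tree $(V,E_T)$, this sub-path is precisely the unique path from $t_1$ to the root. Hence the path from $t_1$ to $a$ contains $t_2$, i.e.\ $t_1 \succeq t_2$.

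To conclude $S_{t_1} \subseteq S_{t_2}$, I would take any $s \in S_{t_1}$ and concatenate the unique path from $s$ to $a$ (which by assumption passes through $t_1$) with the already analyzed portion from $t_1$ onward. Uniqueness of paths in the tree means the path from $s$ to $a$ agrees with the path from $t_1$ to $a$ after reaching $t_1$, so it must also pass through $t_2$. Therefore $s \succeq t_2$ and $s \in S_{t_2}$. In the opposite case where $t_1$ lies between $t_2$ and $a$ on $\gamma$, the symmetric argument yields $S_{t_2} \subseteq S_{t_1}$.

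The argument is essentially a transitivity statement for the partial order $\succeq$, and the only place where care is needed is in invoking the \emph{uniqueness} of the simple path between any two vertices of the tree; this is what allows concatenated paths to be identified with the global path to the root and prevents the two ancestors $t_1, t_2$ from being incomparable. No obstacle beyond this bookkeeping is expected.
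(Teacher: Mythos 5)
Your proposal is correct and takes essentially the same route as the paper: both arguments reduce the claim to showing $t_1$ and $t_2$ are comparable under $\succeq$, using the uniqueness of the tree path from a common element of $S_{t_1}\cap S_{t_2}$ to the root. The only difference is presentational — you argue directly that both ancestors lie on the single path $\gamma$ and hence are ordered, while the paper assumes incomparability and derives a cycle; you also spell out the final transitivity step ($t_1\succeq t_2$ implies $S_{t_1}\subseteq S_{t_2}$) that the paper leaves implicit.
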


\begin{proof}
    Since $S_{t_1} \cap S_{t_2} \ne \emptyset$, there exists $t \in S_{t_1} \cap S_{t_2}$. It follows that $t \succeq t_1$ and $t \succeq t_2$. Now, let us assume, to the contrary, that $t_1 \not\succeq t_2$ and $t_2 \not\succeq t_1$. Then the path from $t$ through $t_1$ to the root $a$ is distinct from the path from $t$ through $t_2$ to the root $a$. The existence of these two distinct paths from $t$ to the root $a$ implies the existence of a cycle, which contradicts the fact that the partial order is defined by a spanning tree. Therefore, either $t_1 \succeq t_2$ or $t_2 \succeq t_1$ which gives that $t_1 \in S_{t_2}$ or $t_2 \in S_{t_1}$. It follows that $S_{t_1}\subseteq S_{t_2}$ or $S_{t_2}\subseteq S_{t_1}$.
\end{proof}

Now, we prove the weak $(1,1)$ boundedness of $T$. 

\begin{theorem}
\label{thm: Hardy weak (1,1) cont}
    The Hardy-type operator $T$ in Definition \ref{def: Hardy op} is weak $(1,1)$ bounded with constant $C=1$. Namely, for all $f\in \ell^1(V,\mu)$ and $\lambda>0$,
    \begin{equation*}
        \mu(V_{Tf}(\lambda))< \frac{1}{\lambda}\norm{f}_{\ell^1(V,\mu)},
    \end{equation*}
    where the subset of vertices $V_{Tf}(\lambda) \coloneqq \set{t \in V : |Tf(t)| > \lambda}$.
\end{theorem}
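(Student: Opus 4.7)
Fix $f\in \ell^1(V,\mu)$ with $f\not\equiv 0$ and $\lambda>0$, and set $A := V_{Tf}(\lambda)$; note that since $Tf\ge 0$, the condition $|Tf(t)|>\lambda$ is just $Tf(t)>\lambda$. The plan is a Calder\'on--Zygmund-style stopping-shadow argument: extract from $A$ a pairwise disjoint family of shadows $\{S_{t'}\}_{t'\in M}$ that still covers $A$, and then sum the defining inequality $Tf(t')>\lambda$ over $t'\in M$.

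To build $M$, I would take the elements of $A$ closest to the root, namely
\begin{equation*}
M := \{\, t'\in A : \text{no } t''\in A \text{ satisfies } t''\prec t'\,\}.
\end{equation*}
Because the spanning tree is rooted at $a$, every vertex has only finitely many ancestors in the tree, so for each $t\in A$ the nonempty set of ancestors of $t$ lying in $A$ (which contains $t$ itself) has an element closest to $a$; that element belongs to $M$ and satisfies $t\succeq t'$, giving the covering property $A\subseteq \bigcup_{t'\in M} S_{t'}$. Disjointness of this family follows from Lemma \ref{lem: Hardy weak (1,1) cont}: if $t_1',t_2'\in M$ are distinct and $S_{t_1'}\cap S_{t_2'}\neq\emptyset$, the lemma gives, after swapping if needed, $S_{t_1'}\subseteq S_{t_2'}$, hence $t_1'\succ t_2'$, contradicting minimality of $t_1'$ in $M$.

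Once disjointness and the covering are in place, the condition $Tf(t')>\lambda$ at each $t'\in M$ reads as the strict inequality $\mu(S_{t'}) < \lambda^{-1}\sum_{s\succeq t'}|f(s)|\mu(s)$. Summing these over the disjoint family and rewriting the double sum as a sum over the disjoint union yields
\begin{equation*}
\mu(A) \;\le\; \sum_{t'\in M}\mu(S_{t'}) \;<\; \frac{1}{\lambda}\sum_{t'\in M}\sum_{s\in S_{t'}}|f(s)|\mu(s) \;\le\; \frac{1}{\lambda}\,\|f\|_{\ell^1(V,\mu)},
\end{equation*}
which is the desired weak-type bound. The only subtle point is ensuring the covering property of $M$, and this rests exclusively on the finiteness of ancestor chains in the rooted tree; beyond that, everything reduces to a clean application of the nesting lemma, and no local finiteness of $G$ or of the spanning tree is required.
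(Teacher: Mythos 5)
Your proof is correct and follows essentially the same route as the paper's: you extract the same set of minimal (closest-to-root) vertices of the level set, use Lemma \ref{lem: Hardy weak (1,1) cont} for disjointness of their shadows, and sum the defining inequality $Tf(t')>\lambda$ over that disjoint family. If anything, you are slightly more explicit than the paper about why the shadows of minimal vertices cover the level set (finiteness of ancestor chains) and about the need to assume $f\not\equiv 0$ for the strict inequality.
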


\begin{proof}
    Let us define the subset of \emph{minimal vertices} $M_{Tf}(\lambda) \subseteq V_{Tf}(\lambda)$ as 
    \begin{equation*}
        M_{Tf}(\lambda) \coloneqq \set{t \in V_{Tf}(\lambda) : s \not\in V_{Tf}(\lambda) \text{ for all } s \prec t}.
    \end{equation*}
    It follows from its definition that 
    \[V_{Tf}(\lambda) \subseteq \bigcup_{t\in M_{Tf}(\lambda)} S_t.\] Consequently,
    \begin{equation*}
        \mu(V_{Tf}(\lambda)) \le \mu\left(\bigcup_{t\in M_{Tf}(\lambda)} S_t\right)
        = \sum_{t \in M_{Tf}(\lambda)} \mu(S_t).
    \end{equation*}
    The last identity is justified by Lemma \ref{lem: Hardy weak (1,1) cont} and by the definition of the set of minimal vertices $M_{Tf}(\lambda)$. More specifically, no two shadows of vertices in $M_{Tf}(\lambda)$ intersect.
    
Now, since $M_{Tf}(\lambda)\subseteq V_{Tf}(\lambda)$, we find that
    \begin{align*}
        \mu(V_{Tf}(\lambda)) &\le \sum_{t\in M_{Tf}(\lambda)} \mu(S_t)\\
        &< \sum_{t \in M_{Tf}(\lambda)} \mu(S_t)\frac{Tf(t)}{\lambda} \\
        &= \frac{1}{\lambda}\sum_{t \in M_{Tf}(\lambda)} \sum_{s \succeq t} |f(s)|\mu(s) \\
        &\le \frac{1}{\lambda}\norm{f}_{\ell^1(V,\mu)}.
    \end{align*}
     We conclude that $T$ is weak $(1,1)$ bounded.
\end{proof}

\begin{theorem}
\label{thm:Hardy strong (p,p) cont}
    The Hardy-type operator $T$ in Definition \ref{def: Hardy op} is strong $(q,q)$ bounded for $1 < q < \infty$. Moreover, for all $f\in \ell^q(V,\mu)$,
    \begin{equation*}
        \norm{Tf}_{\ell^q(V,\mu)}\le \left(\frac{2^q q}{q-1}\right)^{1/q}\norm{f}_{\ell^q(V,\mu)}.
    \end{equation*}
\end{theorem}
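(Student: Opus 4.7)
The plan is to carry out the standard Marcinkiewicz interpolation argument in the discrete setting, using the weak $(1,1)$ estimate from Theorem \ref{thm: Hardy weak (1,1) cont} and the strong $(\infty,\infty)$ estimate from Theorem \ref{thm: Hardy strong (infty,infty) cont} as the two endpoints. The starting identity is the layer-cake representation
\begin{equation*}
    \sum_{t\in V} |Tf(t)|^q \mu(t) \;=\; q\int_0^\infty \lambda^{q-1} \mu(V_{Tf}(\lambda))\, d\lambda,
\end{equation*}
which holds in any $\sigma$-finite measure space and in particular for the counting-type measure induced by $\mu$ on $V$.

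Next, for each fixed $\lambda>0$, I split $f = f_1^\lambda + f_2^\lambda$, where $f_1^\lambda(s)=f(s)$ if $|f(s)|>\lambda/2$ and $0$ otherwise, and $f_2^\lambda$ takes the complementary support. Since $T$ is sublinear (it is actually linear in $|f|$), we have $|Tf|\le |Tf_1^\lambda|+|Tf_2^\lambda|$. By Theorem \ref{thm: Hardy strong (infty,infty) cont}, $\|Tf_2^\lambda\|_{\ell^\infty(V)} \le \|f_2^\lambda\|_{\ell^\infty(V)} \le \lambda/2$, so that whenever $|Tf(t)|>\lambda$ we must have $|Tf_1^\lambda(t)| > \lambda/2$. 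This yields the inclusion $V_{Tf}(\lambda) \subseteq V_{Tf_1^\lambda}(\lambda/2)$, and then the weak $(1,1)$ estimate of Theorem \ref{thm: Hardy weak (1,1) cont} gives
\begin{equation*}
    \mu\bigl(V_{Tf}(\lambda)\bigr) \;\le\; \frac{2}{\lambda}\, \|f_1^\lambda\|_{\ell^1(V,\mu)} \;=\; \frac{2}{\lambda} \sum_{s\in V : |f(s)|>\lambda/2} |f(s)|\mu(s).
\end{equation*}

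Plugging this into the layer-cake identity and swapping the sum and integral by Tonelli's theorem, I obtain
\begin{equation*}
    \|Tf\|_{\ell^q(V,\mu)}^q \;\le\; 2q \sum_{s\in V} |f(s)|\mu(s) \int_0^{2|f(s)|} \lambda^{q-2}\, d\lambda \;=\; \frac{2q\cdot 2^{q-1}}{q-1} \sum_{s\in V} |f(s)|^q \mu(s),
\end{equation*}
which simplifies to $\|Tf\|_{\ell^q(V,\mu)}^q \le \tfrac{2^q q}{q-1}\|f\|_{\ell^q(V,\mu)}^q$, the desired bound after taking $q$-th roots.

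No step here is genuinely hard, since it is the classical Marcinkiewicz argument; the only points deserving care are the justification of the layer-cake formula for the (possibly infinite but $\sigma$-finite) weighted-counting measure on $V$, and verifying that the swap of the integral over $\lambda$ with the sum over $s$ is legitimate, which both follow from Tonelli once everything is nonnegative. I would therefore write the proof as a straightforward verification, flagging only these measure-theoretic technicalities in passing.
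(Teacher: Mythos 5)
Your proof is correct and follows essentially the same route as the paper: the Marcinkiewicz-type splitting of $f$ at height $\lambda/2$, the endpoint estimates from Theorems \ref{thm: Hardy strong (infty,infty) cont} and \ref{thm: Hardy weak (1,1) cont}, the layer-cake formula, and the Tonelli swap. The only cosmetic difference is that the paper first checks $\norm{f_1}_{\ell^1(V,\mu)}<\infty$ (via H\"older and the finite measure of $(V,\mu)$) before invoking the weak $(1,1)$ bound, a point you could note in one line since $\norm{f_1^\lambda}_{\ell^1(V,\mu)}\le (\lambda/2)^{1-q}\norm{f}_{\ell^q(V,\mu)}^q$ in any case.
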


\begin{proof}
The proof of this result follows from Theorem~\ref{thm: Hardy strong (infty,infty) cont} and Theorem~\ref{thm: Hardy weak (1,1) cont}, combined with the Marcinkiewicz interpolation theorem in the discrete setting (see Theorem~1.3.1 in \cite{BL} for a reference on the Marcinkiewicz interpolation theorem).
\end{proof}
The boundedness of some related discrete Hardy-type operators on trees has been studied in the literature; see, for instance, \cite{LgO} and \cite{ACLM}. The proof of Theorem 2.5 relies on a classical geometric argument in tree theory, which
is, in some sense, generalized in \cite[Theorem 6]{ACLM}. A similar argument is also used in \cite[Lemma 3.1]{Lg}.



\section{A Decomposition of Functions Technique}\label{section: Decomposition}

In this work, we use a local-to-global argument to prove the validity of a certain Poincar\'e-type inequality on weighted graphs, relying on the fact that such inequalities are valid on segments or edges. This technique requires writing functions with vanishing mean value on the entire graph as sums of functions supported on segments that also have vanishing mean value. The \emph{boundedness} of this decomposition depends on the \emph{geometric} condition on the graph introduced in the following definition. 

For the remainder of the paper, we assume that the weighted graph $(V,E,\mu)$ has finite measure and degree bounded by $M$.

\begin{definition}
\label{def: John domain}
    Let $G = (V,\mu)$ be a weighted and connected graph. We say that $G$ is a graph in the sense of John if there exists a rooted spanning tree and a positive constant $c$ such that
\begin{equation}
\label{eq: John constant}
    \mu(S_t) \le c\mu(t) \quad \text{for all } t \in V,
\end{equation}
where $S_t \coloneqq \set{s \in V : s\succeq t}$ is the \emph{shadow} of $t$.
\end{definition}

This geometric definition for graphs is inspired by the equivalent definition of John domains proved in \cite{Lg2} on Euclidean domains. 

It is worth noting that condition \eqref{eq: John constant} implies that $(V,\mu)$ has finite measure, since
\begin{equation*}
    \mu(V)=\mu(S_a)\le c\mu(a)<\infty,
\end{equation*}
where $a$ denotes the root of the spanning tree. 

Also, note that any finite weighted graph $(V,\mu)$ satisfies \eqref{eq: John constant} with a constant \[c = \dfrac{\mu(V)}{\min_{t\in V} \mu(t)}.\]

Next, we present three weighted graphs: the first two satisfy \eqref{eq: John constant}, whereas the last one does not.

\begin{example}
\label{ex: k-ary tree is john}
    Let $(V,\mu)$ be a weighted rooted $k$-ary tree (i.e. a tree where each vertex has $k$ children), where the weight is defined by $\mu(t) = \alpha^{\dist(t,a)}$, for every $t\in V$, for some $0<\alpha<1/k$. Here, $dist(t,a)$ denotes the number of edges in the chain that connects $t$ to the root $a$. Then $(V,\mu)$ is John. Indeed, for any $t\in V$, we have the following estimate:
    \begin{equation*}
        \frac{\mu(S_t)}{\mu(t)} = \frac{\sum_{s\in S_t}\mu(s)}{\mu(t)} = \frac{\sum_{s\succeq t}\alpha^{\dist(s,a)}}{\alpha^{\dist(t,a)}} = \sum_{s\succeq t} \alpha^{\dist(s,a) - \dist(t,a)} = \sum_{s\succeq t} \alpha^{\dist(s,t)}
    \end{equation*}
    where the last line holds since $s\succeq t$ and therefore contains $t$ on its unique path to the root $a$. Now, $(V,\mu)$ is a $k$-ary tree, so each vertex has $k$ children. Namely, we can continue the estimation in the following way:
    \begin{equation*}
        \frac{\mu(S_t)}{\mu(t)} = \sum_{s\succeq t} \alpha^{\dist(s,t)} = \sum_{n\ge 0} k^n\alpha^n = \sum_{n \ge 0} (k\alpha)^n = \frac{1}{1-k\alpha}.
    \end{equation*}
 So $k$-ary trees, with the weight introduced above, satisfy condition \eqref{eq: John constant} with a constant estimated by $1/(1-k\alpha)$. 
\end{example}

\begin{example}
\label{ex: John domains}
Given a bounded domain $\Omega \subset \mathbb{R}^n$ and a Whitney decomposition $\{Q_t\}_{t \in V}$, we define the graph $G=(V,E)$, where the vertices correspond to the Whitney cubes, and two vertices $t$ and $s$ are adjacent in $G$ if and only if $Q_t$ and $Q_s$ intersect along an $(n-1)$-dimensional face of one of them. We refer the reader to \cite{S} for details on the existence and properties of Whitney cubes. Furthermore, we define the weight $\mu: V\to \R$ as the Lebesgue measure of each Whitney cube (i.e. $\mu(t):=|Q_t|$). 

Now, it was shown in \cite{Lg2} that if $\Omega$ is a John domain in the sense of Fritz John there exists a Whitney decomposition and a spanning tree $(V,E_T)$ of the graph of Whitney cubes such that 
\begin{equation*}
    Q_s \subseteq K Q_t
\end{equation*}
for any $s, t \in V$, with $s \succeq t$, where $K Q_t$ is a $K$-dilation of $Q_t$. Hence, using that Whitney cubes have disjoint interiors, we conclude that 
\begin{align*}
    \mu(S_t)&=\sum_{s \succeq t} \mu(s) = \sum_{s \succeq t} |Q_s|\\  &=\left| \bigcup_{s \succeq t} Q_s\right| \leq |K Q_t|=K^n |Q_t|=K^n \mu(t).
\end{align*}
Therefore, this graph satisfies the condition in Definition \ref{def: John domain} with constant $c=K^n$.
\end{example}

As we mentioned, every weighted graph that satisfies the condition in Definition \ref{def: John domain} has finite measure. However, not every weighted graph of finite measure satisfies condition \eqref{eq: John constant}. Let us show an example of this fact. 

\begin{example}
\label{ex: finite measure but not John}
    Let $(V,\mu)$ be the weighted path graph on $V=\N_{\ge 2}$, where adjacency is given by consecutive integers. The weight is defined by $\mu(n) = n^{-1}\ln(n)^{-\gamma}$ for all $n \in \N_{\ge 2}$, where $\gamma > 1$. 

Consider the function $f(x) = x^{-1}\ln(x)^{-\gamma}$. Observe that $f(x)$ is continuous, positive, and decreasing in $[2, \infty)$. Since this holds, the integral test implies that
    \begin{equation*}
        \mu(V) = \mu(\N_{\ge 2}) = \sum_{n \ge 2} \mu(n) = \sum_{n \ge 2} \frac{1}{n(\ln(n))^\gamma} < \infty.
    \end{equation*}
    So $(V,\mu)$ has a finite measure. Now, let $n \in \N_{\ge 2}$ and observe that
    \begin{equation*}
        \frac{\mu(S_n)}{\mu(n)} = \frac{\sum_{k \ge n} \frac{1}{k(\ln(k))^\gamma}}{\frac{1}{n(\ln(n))^\gamma}} = n(\ln(n))^\gamma \sum_{k \ge n} \frac{1}{k(\ln(k))^\gamma}.
    \end{equation*}
    The integral of the function $f(x)$ previously mentioned provides a lower estimate for the sum above in the following way:
    \begin{align*}
        \frac{\mu(S_n)}{\mu(n)} &= n(\ln(n))^\gamma \sum_{k \ge n} \frac{1}{k(\ln(k))^\gamma}\\
        &\ge n(\ln(n))^\gamma \int_n^\infty \frac{1}{x(\ln(x))^\gamma}\,\diff x\\
        &= n(\ln(n))^\gamma \frac{1}{(\gamma - 1)(\ln(n))^{\gamma-1}}\\
        &= \frac{n\ln(n)}{\gamma - 1}.
    \end{align*}
    Hence, $\mu(S_n)/\mu(n)$ is not uniformly bounded and $(V,\mu)$ does not satisfy \eqref{eq: John constant}.
\end{example}

Now, let us define the decomposition of functions that we use in this local-to-global argument to prove the $\ell^p(V,\mu)$-Poincar\'{e} inequality for $1 \le p < \infty$. 

\begin{definition}
\label{def: decomp of f}
    Let $(V,\mu)$ be a weighted graph equipped with a rooted spanning tree that defines a partial order $\succeq$. Given $f\in \ell^1(V,\mu)$, we introduce the collection of functions $\set{f_t}_{t\in V^*}$, where $V^* = V\setminus\set{a}$. For each $t\in V^*$, the function $f_t : V \to \R$ is defined by
    \begin{equation}
    \label{eq: def of decomp}
        f_t(s):=
        \begin{cases}
            0 & s\ne t \text{ and } s\ne t_p,\\
            \ds f(t) + \frac{1}{\mu(t)}\sum_{k\succ t} f(k)\mu(k) & s=t,\\
            \ds-\frac{1}{\mu(t_p)}\sum_{k\succeq t} f(k)\mu(k) & s=t_p,
        \end{cases}
    \end{equation}
    for any $s\in V$. 
\end{definition}

Observe that this collection of functions is well defined since $f\in \ell^1(V,\mu)$. The following results state some other properties that the functions in \eqref{eq: def of decomp} satisfy. 

\begin{theorem}
\label{thm: decomp of f}
    Let $(V,\mu)$ be a weighted graph satisfying the assumptions stated in Definition \ref{def: decomp of f}. Now, let $f\in\ell^1(V,\mu)$, a function with sum zero with respect to $\mu$ on $V$. Then the functions $f_t$ in the collection $\set{f_t}_{t\in V^*}$ presented in (\ref{eq: def of decomp}) describe a decomposition of $f$ supported on the segments $\set{t,t_p}$ such that $f=\sum_{t\in V^*}f_t$ and $f_t$ sums zero with respect to $\mu$ on $V$ for every $t\in V^*$.
\end{theorem}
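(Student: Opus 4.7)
The plan is to verify the three assertions in turn: the support claim, the sum-to-zero property of each $f_t$, and the decomposition identity $f = \sum_{t \in V^*} f_t$. The support claim is immediate from the definition of $f_t$ in \eqref{eq: def of decomp}, which is nonzero only at $s=t$ and $s=t_p$. Also, the functions $f_t$ are well defined as elements of $\ell^1(V,\mu)$ because the tail sums $\sum_{k \succeq t} f(k)\mu(k)$ are dominated in absolute value by $\norm{f}_{\ell^1(V,\mu)}<\infty$.

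Next, I would check that $f_t$ sums to zero with respect to $\mu$ on $V$ by a direct computation. Since $f_t$ is supported on $\{t,t_p\}$,
\begin{equation*}
\sum_{s\in V} f_t(s)\mu(s) = f_t(t)\mu(t) + f_t(t_p)\mu(t_p) = f(t)\mu(t) + \sum_{k\succ t} f(k)\mu(k) - \sum_{k\succeq t} f(k)\mu(k),
\end{equation*}
and the right-hand side vanishes because $\{k : k\succeq t\} = \{t\}\cup\{k:k\succ t\}$. Note that this step does not use the hypothesis that $f$ sums to zero on $V$.

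The main step is to verify $f(s) = \sum_{t\in V^*} f_t(s)$ for every $s \in V$. For a fixed $s$, the only indices $t\in V^*$ for which $f_t(s)\ne 0$ are $t=s$ (when $s\ne a$) and the children of $s$; both are finite in number since the tree is locally finite, so the sum is well defined pointwise. The key combinatorial observation is that the shadows of the children of $s$ partition $\{k : k\succ s\}$; this follows directly from the partial order induced by the spanning tree. Using this, for $s\in V^*$ I would combine the contribution from $f_s(s)$ with the sum over children $t$ of $s$ of $f_t(s) = -\frac{1}{\mu(s)}\sum_{k\succeq t}f(k)\mu(k)$, and the two tail sums cancel, leaving $f(s)$.

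The one place where the hypothesis that $f$ sums to zero on $V$ enters is the case $s=a$, which is the main (mild) obstacle. Here only the children of $a$ contribute, giving
\begin{equation*}
\sum_{t\in V^*} f_t(a) = -\frac{1}{\mu(a)}\sum_{t:\, t_p=a}\sum_{k\succeq t} f(k)\mu(k) = -\frac{1}{\mu(a)}\sum_{k\succ a} f(k)\mu(k),
\end{equation*}
and the hypothesis $\sum_{k\in V} f(k)\mu(k)=0$ turns the right-hand side into $f(a)$. This completes the decomposition, and the theorem follows.
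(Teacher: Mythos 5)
Your proof is correct and follows essentially the same route as the paper's: the same direct computation for the zero-sum property of each $f_t$, the same cancellation of tail sums via the observation that the shadows of the children of $s$ partition $\set{k : k \succ s}$, and the same use of the global zero-sum hypothesis only at the root vertex. One small remark: the paper justifies convergence of the sum over the children of $s$ by disjointness of their shadows together with $f\in\ell^1(V,\mu)$, rather than by local finiteness, which is not among the hypotheses of Definition \ref{def: decomp of f}.
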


\begin{proof}
    Notice that $f_t$ is indeed supported on the segment $\set{t,t_p}$ for each $t\in V^*$. Now, let $s$ be a vertex other than the root. Then, by using (\ref{eq: def of decomp}), we attain the following
    \begin{align*}
        \sum_{t\in V^*} f_t(s) &= \sum_{t : s=t \text{ or } s=t_p} f_t(s) \\
        &= f_s(s) + \sum_{t : s=t_p} f_t(s) \\
        &= f(s) + \frac{1}{\mu(s)}\sum_{k\succ s}f(k)\mu(k) + \sum_{t : s=t_p}\left(-\frac{1}{\mu(s)}\sum_{k\succeq t}f(k)\mu(k)\right).
    \end{align*}
    Furthermore, the sum over all vertices $t$ such that $s=t_p$ converges since the shadow of each vertex $t$ is disjoint and $f\in \ell^1(V,\mu)$. We now derive the following
    \begin{align*}
        \sum_{t\in V^*} f_t(s) &= f(s) + \frac{1}{\mu(s)}\sum_{k\succ s}f(k)\mu(k) -\frac{1}{\mu(s)}\sum_{t:s=t_p}\sum_{k\succeq t}f(k)\mu(k)\\
        &=f(s) + \frac{1}{\mu(s)}\sum_{k\succ s}f(k)\mu(k) -\frac{1}{\mu(s)}\sum_{k\succ s}f(k)\mu(k)\\
        &= f(s).
    \end{align*}
    If $s$ is the root, that is $s=a$, then $f_t(a) = 0$ for each $t$ that is not a child of the root $a$. Additionally,
    \begin{align*}
        \sum_{t\in V^*} f_t(s) &= \sum_{t : a=t_p} f_t(a) \\
        &= \sum_{t : a=t_p} \left(-\frac{1}{\mu(a)}\sum_{k\succeq t}f(k)\mu(k)\right)\\
        &= -\frac{1}{\mu(a)}\sum_{t:a=t_p}\sum_{k\succeq t}f(k)\mu(k)\\
        &= -\frac{1}{\mu(a)}\sum_{k\succ a}f(k)\mu(k)\\
        &=f(a),
    \end{align*}
    where the last line holds since $f$ has sum zero with respect to $\mu$ on $V$. Whence, we conclude that $f$ can be decomposed by $f_t$ for $t\in V^*$. We now aim to show that each $f_t$ sums zero with respect to $\mu$ on $V$. To that end, let $t\in V^*$. Then
    \begin{align*}
        \sum_{s\in V} f_t(s)\mu(s) &= \sum_{s : s=t \text{ or } s=t_p} f_t(s)\mu(s)\\
        &= \left(f(t) + \frac{1}{\mu(t)}\sum_{k\succ t} f(k)\mu(k)\right)\mu(t) - \left(\frac{1}{\mu(t_p)}\sum_{k\succeq t} f(k)\mu(k)\right)\mu(t_p)\\
        &= \sum_{k\succeq t} f(k)\mu(k) - \sum_{k\succeq t} f(k)\mu(k) = 0,
    \end{align*}
    as desired.
\end{proof}

Notice that the previous proof does not require the validity of condition \eqref{eq: John constant}; however, this condition is used in the proof of the following result. The next corollary follows from the boundedness of the Hardy-type operator $T$.

\begin{corollary}
\label{cor: sum||f_t||<=K||f||}
    Let $(V,\mu)$ be a weighted graph with degree bounded by $M$ that satisfies \eqref{eq: John constant}. Then, for every $g\in \ell^q(V,\mu)$, with $q\in (1,\infty)$, the collection of functions $\set{g_t}_{t\in V^*}$ defined in (\ref{eq: def of decomp}) satisfies the following estimate:
    \begin{equation}
    \label{eq: sum||f_t||<=K||f||}
        \sum_{t\in V^*}\norm{g_t}_{\ell^q(V,\mu)}^q\le c^q M \frac{2^q q}{q-1} \norm{g}_{\ell^q(V,\mu)}^q, 
    \end{equation}
    where $c$ is the constant that appears in Definition \ref{def: John domain}.
\end{corollary}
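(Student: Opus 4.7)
The plan is to bound the $\ell^q(V,\mu)$-norm of each $g_t$ pointwise in terms of the Hardy-type operator $T$ applied to $g$, and then invoke the strong $(q,q)$ continuity of $T$ from Theorem \ref{thm:Hardy strong (p,p) cont}. Since $g_t$ is supported on the two-point segment $\set{t,t_p}$, I would first record the exact split
$$\norm{g_t}_{\ell^q(V,\mu)}^q = |g_t(t)|^q\mu(t) + |g_t(t_p)|^q\mu(t_p),$$
which reduces the task to estimating two separate sums over $V^*$.

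For the vertex $t$ itself, combining the two summands in the definition of $g_t(t)$ gives $g_t(t)=\frac{1}{\mu(t)}\sum_{s\succeq t}g(s)\mu(s)$, so the triangle inequality together with \eqref{eq: John constant} yields
$$|g_t(t)|\le \frac{\mu(S_t)}{\mu(t)}Tg(t)\le c\,Tg(t).$$
Taking $q$-th powers, multiplying by $\mu(t)$, and summing over $t\in V^*$ gives $\sum_{t\in V^*}|g_t(t)|^q\mu(t)\le c^q\norm{Tg}^q_{\ell^q(V,\mu)}$.

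For the parent $t_p$, the key move is to bound $|g_t(t_p)|$ by a quantity depending on the parent $k=t_p$ rather than on $t$, so that the degree bound can be harnessed after grouping siblings. Enlarging the sum from $S_t$ to $S_k$ (valid since $t\succ k$ implies $S_t\subseteq S_k$) and applying \eqref{eq: John constant} at $k$ produces
$$|g_t(t_p)|\le \frac{1}{\mu(k)}\sum_{s\succeq k}|g(s)|\mu(s)=\frac{\mu(S_k)}{\mu(k)}Tg(k)\le c\,Tg(k),$$
a bound that is constant in $t$ across siblings of $k$. Reindexing by parents, and using that every vertex has at most $M$ children in the spanning tree,
$$\sum_{t\in V^*}|g_t(t_p)|^q\mu(t_p)=\sum_{k\in V}\mu(k)\!\!\sum_{t:\,t_p=k}|g_t(k)|^q\le Mc^q\norm{Tg}^q_{\ell^q(V,\mu)}.$$
Adding the two contributions and invoking Theorem \ref{thm:Hardy strong (p,p) cont} to replace $\norm{Tg}^q_{\ell^q(V,\mu)}$ by $\frac{2^q q}{q-1}\norm{g}^q_{\ell^q(V,\mu)}$ yields the claimed inequality, with the factor $M$ absorbing the additive constant coming from the first sum (since $M\ge 1$).

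The main obstacle is choosing the right pointwise estimate for the parent term: the seemingly tighter bound $|g_t(t_p)|\le c\,Tg(t)$, obtained from $\mu(S_t)\le\mu(S_{t_p})\le c\mu(t_p)$, leads to $\sum_t(Tg(t))^q\mu(t_p)$, which does not collapse into $\norm{Tg}^q_{\ell^q(V,\mu)}$ because the weight $\mu(t_p)$ is evaluated at the parent while $T$ is evaluated at the child; the coarser bound $|g_t(t_p)|\le c\,Tg(k)$ decouples from $t$, so the at-most-$M$ siblings of $k$ collapse into a single factor $M$ upon reindexing, which is precisely what produces the degree dependence in the stated constant.
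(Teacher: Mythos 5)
Your argument follows the paper's proof essentially step for step: the same two-point split of $\norm{g_t}_{\ell^q(V,\mu)}^q$, the same pointwise bounds $|g_t(t)|\le c\,Tg(t)$ and $|g_t(t_p)|\le c\,Tg(t_p)$ (obtained by enlarging the shadow from $S_t$ to $S_{t_p}$), the same reindexing of the parent sum over parents, and the same appeal to Theorem \ref{thm:Hardy strong (p,p) cont}. The one flaw is in the final bookkeeping: bounding the number of children of \emph{every} vertex by $M$ gives $\sum_{t\in V^*}|g_t(t_p)|^q\mu(t_p)\le M c^q\norm{Tg}^q_{\ell^q(V,\mu)}$, and adding the diagonal contribution $c^q\norm{Tg}^q_{\ell^q(V,\mu)}$ yields the constant $(M+1)c^q$, not $Mc^q$; your parenthetical claim that the factor $M$ ``absorbs'' the additive constant because $M\ge 1$ is false, since $M+1>M$. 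To recover the stated constant, combine the two sums vertex by vertex: a non-root vertex $k$ appears once as a child (from the first sum) and at most $\deg(k)-1\le M-1$ times as a parent (one of its tree edges is occupied by its own parent), while the root appears only as a parent, at most $M$ times; hence the total multiplicity at each vertex is at most its tree degree, which is at most $M$. This is exactly how the paper arrives at $c^qM\sum_{s\in V}|Tg(s)|^q\mu(s)$ before applying the strong $(q,q)$ continuity of $T$.
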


\begin{proof}
    We begin our estimation argument as follows:
    \begin{equation}   
    \label{eq: sum||f_t|| only on s=t and s=t_p}
        \sum_{t\in V^*} \norm{g_t}_{\ell^q(V,\mu)}^q = \sum_{t\in V^*} |g_t(t)|^q\mu(t)+|g_t(t_p)|^q\mu(t_p),
    \end{equation}
    where we wish to estimate both $g_t(t)$ and $g_t(t_p)$ by the Hardy-type operator in Definition \ref{def: Hardy op}. The strong $(q,q)$ boundedness of $T$ implies the final estimation. Thus, we   proceed as follows:
    \begin{equation*}
        |g_t(t)| = \left|g(t) + \frac{1}{\mu(t)}\sum_{k\succ t} g(k)\mu(k)\right|=\left|\frac{1}{\mu(t)}\sum_{k\succeq t} g(k)\mu(k)\right|\leq \frac{\mu(S_t)}{\mu(t)}\frac{1}{\mu(S_t)}\sum_{k\succeq t} |g(k)|\mu(k).
    \end{equation*}
    Then $|g_t(t)|\le cTg(t)$, where $c$ is the constant in Definition \ref{def: John domain}. Similarly, we give a bound for $|g_t(t_p)|$:
    \begin{equation*}
        |g_t(t_p)| = \left|-\frac{1}{\mu(t_p)}\sum_{k\succeq t} g(k)\mu(k)\right| \le \frac{\mu(S_{t_p})}{\mu(t_p)} \frac{1}{\mu(S_{t_p})}\sum_{k\succeq {t_p}} |g(k)|\mu(k).
    \end{equation*}
    Hence, $|g_t(t_p)|\le c Tg(t_p)$. Now, observe that Theorem \ref{thm:Hardy strong (p,p) cont} and the fact that $g\in \ell^q(V,\mu)$ implies that $Tg\in \ell^q(V,\mu)$. Hence, from (\ref{eq: sum||f_t|| only on s=t and s=t_p}) we can conclude that
    \begin{align*}
        \sum_{t\in V^*} \norm{g_t}^q_{\ell^q(V,\mu)} &\le c^q \sum_{t\in V^*} |Tg(t)|^q\mu(t) + c^q \sum_{t\in V^*}|Tg(t_p)|^q \mu(t_p)\\ 
        &= c^q \sum_{s\in V^*} |Tg(s)|^q\mu(s) + c^q \sum_{s\in V}|Tg(s)|^q \mu(s) \#\{t\in V^* \colon t_p=s\}\\ 
        &\leq c^q M \sum_{s\in V} |Tg(s)|^q \mu(s)\\
        &\le c^q M \frac{2^q q}{q-1} \norm{g}_{\ell^q(V,\mu)}^q,
    \end{align*}
    which concludes the proof.
\end{proof}


\section{A global weighted Poincar\'e Inequality on Graphs with Rooted Spanning Trees}\label{section: Poincare}

In this section, we prove the validity of a global $\ell^p(V,\mu)$-Poincar\'e inequality on weighted graphs $(V,\mu)$, that satisfy the condition in Definition \ref{def: John domain}, via a local-to-global argument. This technique requires us to prove that the $\ell^p(V,\mu)$-Poincar\'e inequality is satisfied locally. In particular, we prove that it is satisfied on any edge of a general weighted graph $(V,\mu)$. We prove this fact since the collection of decomposed functions $\set{g_t}_{t\in V^*}$ in Definition \ref{def: decomp of f} is supported exactly on the segments $\set{t,t_p}\subseteq V$ for a spanning tree in the graph. Through a dual argument, we are able to utilize the decomposition and thus the validity of the local Poincar\'e inequality to prove the global Poincar\'e inequality.

Before presenting the proofs of the local and global $\ell^p(V,\mu)$-Poincar\'e inequalities, we recall the notion of gradient that we use in this work.

\begin{definition}
\label{def: length(grad f)}
    Given a locally finite graph $G = (V,E)$ and a function $f : V \to \R$, we define the \emph{length of the gradient} of $f$ as the function $|\ddiff f| : V \to \R_{\ge 0}$ defined by
    \begin{equation}
    \label{eq: length(grad f)}
        |\ddiff f|(t) = \sum_{s : s\sim t} |f(s)-f(t)| \quad \text{for all } t\in V.
    \end{equation}
\end{definition}

This is a standard notion in analysis on graphs and plays the role of gradients in analysis on metric spaces \cite{LST}. Now, we denote the average of a function $f : V \to \R$ on a subset of vertices $\Omega \subseteq V$ by 
\begin{equation*}
    f_\Omega = \frac{1}{\mu(\Omega)}\sum_{t\in \Omega} f(t)\mu(t).
\end{equation*}

We begin our discussion by proving that a $\ell^p(V,\mu)$-Poincar\'e inequality is satisfied locally on the segments $\set{t, t_p}\subseteq V$.

\begin{lemma}
\label{lem: local poincare}
    Let $G = (V,\mu)$ be a weighted graph with a distinguished spanning tree that defines on $V$ a partial order $\preceq$. Now, given a function $f\in \ell^p(V,\mu)$, $p\in [1,\infty)$, that has zero sum with respect to $\mu$ on the segment $\set{t, t_p} \subseteq V$ for a certain $t\in V$, it follows that 
    \begin{equation*}
        \norm{f}_{\ell^p(\set{t,t_p}, \mu)}\le \norm{|\ddiff_t f|}_{\ell^p(\set{t,t_p}, \mu)}.
    \end{equation*}
Here $|\ddiff_t f|$ denotes the gradient restricted to the subgraph $(\{t,t_p\},\mu)$ of $G$, and takes the value $|f(t)-f(t_p)|$ at both $t$ and $t_p$.
\end{lemma}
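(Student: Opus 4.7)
The plan is to reduce this to an essentially trivial two-point inequality, since the segment $\set{t,t_p}$ contains only two vertices connected by a single edge (note that $t$ and $t_p$ are adjacent in the spanning tree and hence in $G$). On the induced subgraph $\set{t,t_p}$, the only neighbor of $t$ is $t_p$ and vice versa, so by Definition \ref{def: length(grad f)} the gradient of $\left.f\right|_{\set{t,t_p}}$ takes the same value $|f(t)-f(t_p)|$ at both vertices. Therefore
\begin{equation*}
    \norm{|\ddiff(\left.f\right|_{\set{t,t_p}})|}_{\ell^p(\set{t,t_p},\mu)}^p = |f(t)-f(t_p)|^p \bigl(\mu(t)+\mu(t_p)\bigr).
\end{equation*}

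The next step is to exploit the hypothesis $f(t)\mu(t)+f(t_p)\mu(t_p)=0$. Since $\mu(t),\mu(t_p)>0$, this identity forces $f(t)$ and $f(t_p)$ to have opposite signs (or to both vanish). In either case one obtains the sign-alignment identity
\begin{equation*}
    |f(t)-f(t_p)|=|f(t)|+|f(t_p)|,
\end{equation*}
so the inequality to be proved collapses to
\begin{equation*}
    |f(t)|^p\mu(t)+|f(t_p)|^p\mu(t_p)\le \bigl(|f(t)|+|f(t_p)|\bigr)^p\bigl(\mu(t)+\mu(t_p)\bigr).
\end{equation*}

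This final inequality is immediate, since $|f(t)|\le |f(t)|+|f(t_p)|$ and $|f(t_p)|\le |f(t)|+|f(t_p)|$; raising to the $p$-th power, multiplying by the respective weights, and adding yields the bound term-by-term. There is no genuine obstacle in this proof; the only point deserving explicit mention is the sign-alignment identity coming from the zero-mean condition, which is precisely the feature that makes the local Poincaré constant equal to $1$ on a single edge.
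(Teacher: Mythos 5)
Your proof is correct and follows essentially the same route as the paper: both arguments use the zero-mean condition to deduce that $f(t)$ and $f(t_p)$ have opposite signs (so $|f(t)|,|f(t_p)|\le |f(t)-f(t_p)|$) and then bound the two terms of $\norm{f}_{\ell^p(\set{t,t_p},\mu)}^p$ termwise by the corresponding terms of the gradient norm. Your explicit observation that $|f(t)-f(t_p)|=|f(t)|+|f(t_p)|$ is just a restatement of the same sign argument, so there is nothing further to add.
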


\begin{proof}
    Since $f$ has zero sum with respect to $\mu$ on $\set{t,t_p}$, we have $f(t)\mu(t)+f(t_p)\mu(t_p)=0$. Now the weight function $\mu$ is positive, so without loss of generality, $f(t)> 0$ and $f(t_p)<0$. This implies that $|f(t)|\le |f(t)-f(t_p)|$ and $|f(t_p)|\le |f(t)-f(t_p)|$. Thus,
    \begin{align*}
        \norm{f}_{\ell^p(\set{t,t_p},\mu)}^p&= |f(t)|^p\mu(t)+|f(t_p)|^p\mu(t_p)\\
        &\le |f(t)-f(t_p)|^p\mu(t)+|f(t)-f(t_p)|^p\mu(t_p)\\
        &= \left(|\ddiff_t f|(t)\right)^p\mu(t) + \left(|\ddiff_t f|(t_p)\right)^p\mu(t_p)\\
        &= \norm{|\ddiff_t f |}_{\ell^p(\set{t,t_p},\mu)}^p
    \end{align*}
    as desired.
\end{proof}

\begin{theorem}
\label{thm: Poincare}
    Let $(V,\mu)$ be a weighted graph with degree bounded by $M$ that satisfies the condition in Definition \ref{def: John domain}. Also, let $p\in [1,\infty)$. Then, there exists a constant $C_P>0$ such that the inequality
    \begin{equation}
    \label{eq: global poincare}
        \norm{f}_{\ell^p(V,\mu)} \le C_P \norm{|\ddiff f|}_{\ell^p(V,\mu)}
    \end{equation}
    holds for any $f\in \ell^p(V,\mu)$ that has zero sum with respect to $\mu$ on $V$. Furthermore, the constant in the Poincar\'e inequality is upper bounded by a multiple of the geometric constant in \eqref{eq: John constant}. Indeed, 
    \begin{equation*}
C_P\le 2c\, (Mp)^{1-1/p},
    \end{equation*}
where $c$ is the constant that appears in Definition \ref{def: John domain}.
\end{theorem}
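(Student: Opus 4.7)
The plan is to combine duality with the decomposition of Theorem~\ref{thm: decomp of f}. For $p \in (1,\infty)$, I would represent
\[
\norm{f}_{\ell^p(V,\mu)} = \sup\left\{\Bigl|\sum_{s \in V} f(s)\, g(s)\, \mu(s)\Bigr| : g \in \ell^{p'}(V,\mu),\ \norm{g}_{\ell^{p'}(V,\mu)} \le 1\right\},
\]
where $p'$ is the H\"older conjugate of $p$. Because $f$ sums to zero with respect to $\mu$, the pairing is unchanged when $g$ is replaced by $h := g - g_V$, and $h$ now sums to zero with $\norm{h}_{\ell^{p'}(V,\mu)} \le 2\norm{g}_{\ell^{p'}(V,\mu)}$.

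Having arranged that $h$ has mean zero, Theorem~\ref{thm: decomp of f} decomposes $h = \sum_{t \in V^*} h_t$ with each $h_t$ supported on the tree-edge $\{t, t_p\}$ and $h_t(t)\mu(t) + h_t(t_p)\mu(t_p) = 0$. Setting $\alpha_t := h_t(t)\mu(t)$ and interchanging sums (justified by absolute convergence, since $h \in \ell^1(V,\mu)$ by finite measure), a short calculation gives the key identity
\[
\sum_{s \in V} f(s)\,h(s)\,\mu(s) = \sum_{t \in V^*} \alpha_t\,(f(t)-f(t_p)).
\]
The geometric crux is that $t \sim t_p$ in $G$, so $|f(t)-f(t_p)| \le |\ddiff f|(t)$. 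Applying H\"older's inequality with exponents $p'$ and $p$ to the product $(|h_t(t)|\mu(t)^{1/p'})\cdot(|\ddiff f|(t)\mu(t)^{1/p})$ then yields
\[
\Bigl|\sum_{s\in V} f\,g\,\mu\Bigr| \le \Bigl(\sum_{t\in V^*} |h_t(t)|^{p'}\mu(t)\Bigr)^{1/p'} \norm{|\ddiff f|}_{\ell^p(V,\mu)}.
\]

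The first factor is majorized by $\bigl(\sum_t \norm{h_t}_{\ell^{p'}(V,\mu)}^{p'}\bigr)^{1/p'}$, which Corollary~\ref{cor: sum||f_t||<=K||f||} controls by a constant times $\norm{h}_{\ell^{p'}(V,\mu)}$. Inserting the explicit constant $cM^{1/p'}\cdot 2p^{1-1/p}$ coming from Corollary~\ref{cor: sum||f_t||<=K||f||} (with $q = p'$ and the conversion $p'/(p'-1) = p$), together with $\norm{h}_{\ell^{p'}(V,\mu)} \le 2\norm{g}_{\ell^{p'}(V,\mu)} \le 2$, produces the advertised shape $C_P \lesssim cMp^{1-1/p}$ after taking the supremum in $g$. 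The case $p=1$ is treated along identical lines, with duality against $\ell^\infty$, the same bound $\norm{h}_{\ell^\infty(V)} \le 2\norm{g}_{\ell^\infty(V)}$, and Corollary~\ref{cor: sum||f_t||<=K||f||} replaced by the strong $(\infty,\infty)$ continuity from Theorem~\ref{thm: Hardy strong (infty,infty) cont}, which supplies the pointwise bound $|h_t(t)| \le c\norm{h}_{\ell^\infty(V)}$ uniformly in $t$ and delivers $C_P \le 2c$.

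The main obstacle is really the careful bookkeeping of constants: the factor of two from $g \mapsto g - g_V$, the constant from Corollary~\ref{cor: sum||f_t||<=K||f||}, and the exponent $1/p'$ on $M$ must all be tracked cleanly for the final estimate to match the specific form stated, and the cleanest route is to absorb the mean-subtraction loss into the constant supplied by Corollary~\ref{cor: sum||f_t||<=K||f||}. Conceptually, the proof illustrates how the tree-shadow condition~\eqref{eq: John constant}---which is exactly what gives continuity of the Hardy-type operator and hence the decomposition estimate in Corollary~\ref{cor: sum||f_t||<=K||f||}---suffices to transport the trivial edge-level Poincar\'e inequality (Lemma~\ref{lem: local poincare}) to a global one on $(V,\mu)$.
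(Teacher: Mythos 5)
Your proposal is correct and follows the same overall architecture as the paper --- duality against $\ell^{p'}$, subtraction of the mean $g_V$, the decomposition of Theorem~\ref{thm: decomp of f}, and the summed estimate of Corollary~\ref{cor: sum||f_t||<=K||f||} --- but the edge-level step is handled differently, and more cleanly. The paper inserts $f-f_{\set{t,t_p}}$, applies H\"older on each two-point segment, and then invokes the local Poincar\'e inequality of Lemma~\ref{lem: local poincare}; you instead exploit the zero-sum property $h_t(t)\mu(t)+h_t(t_p)\mu(t_p)=0$ to evaluate the pairing exactly, $\sum_{s}f(s)h_t(s)\mu(s)=h_t(t)\mu(t)\,(f(t)-f(t_p))$, which makes Lemma~\ref{lem: local poincare} unnecessary and lets you run H\"older over $t\in V^*$ using only the $t$-component $|h_t(t)|^{p'}\mu(t)$ of each $\norm{h_t}_{\ell^{p'}}^{p'}$. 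This buys you a slightly sharper bound in $M$: you land at $C_P\le 4cM^{1-1/p}p^{1-1/p}$, with $M^{1-1/p}$ in place of the paper's $M$ (the paper pays an extra $M^{1/p}$ when it regroups $\sum_{t}(\mu(t)+\mu(t_p))|f(t)-f(t_p)|^p$ into a sum over vertices). Your $p=1$ argument, using the pointwise bound $|h_t(t)|\le c\norm{h}_{\ell^\infty(V)}\le 2c$ from Theorem~\ref{thm: Hardy strong (infty,infty) cont}, delivers exactly $C_P\le 2c$ as stated.

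Two small points to tidy. First, justifying the interchange of the sums over $s$ and $t$ by ``$h\in\ell^1(V,\mu)$'' alone is not quite the right reason; the clean justification (as in the paper) is that the same H\"older--plus--Corollary~\ref{cor: sum||f_t||<=K||f||} computation, run with absolute values, shows $\sum_{t\in V^*}|h_t(t)|\mu(t)\,|f(t)-f(t_p)|<\infty$, so Tonelli applies. Second, your constant $4cM^{1-1/p}p^{1-1/p}$ matches the advertised form only up to the factor $4M^{-1/p}$ versus $2$, so for small $M$ it is not literally below the stated $2cMp^{1-1/p}$; note, however, that the paper's own chain silently drops the factor $2$ coming from $\norm{g-g_V}_{\ell^{p'}(V,\mu)}\le 2$, so a fully bookkept version of either argument gives a constant of the same shape, and yours is the sharper of the two in its dependence on $M$.
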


\begin{proof}
    First, let us consider the case $p> 1$. We estimate $\norm{f}_{\ell^p(V,\mu)}$ by a dual argument. That is, we have 
    \begin{equation*}
        \norm{f}_{\ell^p(V,\mu)} = \sup_{\norm{g}_{\ell^q(V,\mu)}\le 1} \sum_{s\in V} f(s)g(s)\mu(s),
    \end{equation*}
    where $1/p+1/q=1$. 
    
    Furthermore, the fact that $f$ sums zero with respect to $\mu$ on $V$ implies that
    \begin{equation*}
        \sum_{s\in V} f(s)(g(s) - g_V)\mu(s) = \sum_{s\in V}f(s)g(s)\mu(s) - g_V\sum_{s\in V}f(s)\mu(s) = \sum_{s\in V}f(s)g(s)\mu(s).
    \end{equation*}
    Thus,
    \begin{equation*}
        \norm{f}_{\ell^p(V,\mu)} = \sup_{\norm{g}_{\ell^q(V,\mu)}\le 1} \sum_{s\in V} f(s)(g(s)-g_V)\mu(s),
    \end{equation*}
    where $g-g_V$ sums zero with respect to $\mu$ on $V$. Moreover, notice that 
\begin{align*}
        \norm{g-g_V}_{\ell^q(V,\mu)} &\le \norm{g}_{\ell^q(V,\mu)}+\norm{g_V}_{\ell^q(V,\mu)}=\norm{g}_{\ell^q(V,\mu)}+|g_V|\mu(V)^{1/q} \\[0.85em] 
        &=   \norm{g}_{\ell^q(V,\mu)}+\mu(V)^{1/q-1}\sum_{s\in V}|g(s)|\mu(s)\\
        &\le \norm{g}_{\ell^q(V,\mu)}+\mu(V)^{1/q-1}\mu(V)^{1/p}\norm{g}_{\ell^q(V,\mu)}\le 2\norm{g}_{\ell^q(V,\mu)}\le 2.
    \end{align*}
Therefore, using Theorem \ref{thm: decomp of f}, we can decompose $g-g_V$ according to (\ref{eq: def of decomp}). For simplicity, we write the decomposed functions as $g_t$ for $t\in V^*$. Thus, 
    \begin{equation}
    \label{eq: dual argument sum(g_t)}
        \norm{f}_{\ell^p(V,\mu)} = \sup_{\norm{g}_{\ell^q(V,\mu)}\le 1} \sum_{s\in V} f(s) \left(\sum_{t\in V^*} g_t(s)\right)\mu(s) = \sup_{\norm{g}_{\ell^q(V,\mu)}\le 1} \sum_{s\in V} \sum_{t\in V^*}f(s)g_t(s)\mu(s).
    \end{equation}

Notice that the double sum stated above is absolutely convergent. In fact, we use the support of each function $g_t$ and \eqref{eq: sum||f_t||<=K||f||}: 
\begin{align*}
\sum_{s\in V} \sum_{t\in V^*} |f(s)| |g_t(s)|\mu(s) 
&=\sum_{t\in V^*} \sum_{\substack{s : s=t \text{ or }\\ s=t_p}} |f(s)||g_t(s)|\mu(s)  \\ 
&\le \left(\sum_{t\in V^*} \sum_{\substack{s : s=t \text{ or }\\ s=t_p}} |f(s)|^p \mu(s)\right)^{1/p}
 \left(\sum_{t\in V^*} \sum_{\substack{s : s=t \text{ or }\\ s=t_p}} |g_t(s)|^q
 \mu(s)\right)^{1/q}  \\ 
&\leq M^{1/p}\norm{f}_{\ell^p(V,\mu)}  \left(\sum_{t\in V^*} \norm{g_t}^q_{\ell^q(V,\mu)} \right)^{1/q}.
\end{align*} 
Observe in the middle line that each term $|f(s)|^p\mu(s)$, with $s\in V$ such that $s=t$ or $s=t_p$ for some $t\in V^*$, appears multiple times. Actually, the number of repetitions equals the number of children plus its parent if there is one. Therefore, the double sum in (\ref{eq: dual argument sum(g_t)}) converges absolutely, and we may interchange the order of summation. Now, note that $f_{\set{t,t_p}}$ is the average of $f$ on the segment $\set{t,t_p}$ and so $f - f_{\set{t, t_p}}$ sums zero with respect to $\mu$ on the segment $\set{t,t_p}$. With these notes, we proceed from (\ref{eq: dual argument sum(g_t)}) as follows
    \begin{align}
    \label{eq: time for local poincare}
        \norm{f}_{\ell^p(V,\mu)} &= \sup_{\norm{g}_{\ell^q(V,\mu)}\le 1} \sum_{t\in V^*} \sum_{s\in V}f(s)g_t(s)\mu(s) \nonumber \\
        &= \sup_{\norm{g}_{\ell^q(V,\mu)}\le 1} \sum_{t\in V^*} \sum_{\substack{s : s=t \text{ or }\\ s=t_p}} f(s)g_t(s)\mu(s) \nonumber \\
        &= \sup_{\norm{g}_{\ell^q(V,\mu)}\le 1} \sum_{t\in V^*} \sum_{\substack{s : s=t \text{ or }\\ s=t_p}} (f(s)-f_{\set{t,t_p}})g_t(s)\mu(s)\nonumber \\
        &\le \sup_{\norm{g}_{\ell^q(V,\mu)}\le 1} \sum_{t\in V^*}\norm{f-f_{\set{t,t_p}}}_{\ell^p(\set{t,t_p},\mu)}\norm{g_t}_{\ell^q(\set{t,t_p},\mu)}.
    \end{align}
    The third equality holds because $g_t$ has zero sum with respect to $\mu$ on $V$ by Theorem \ref{thm: decomp of f}; hence, $g_t$ also has zero sum with respect to $\mu$ on the segment $\set{t,t_p}$. The last line follows from H\"{o}lder's inequality. Now, using Lemma \ref{lem: local poincare}, we obtain
    \begin{align*}
        |\ddiff_t(f-f_{\set{t,t_p}})|(s) &= |(f(t)-f_{\set{t,t_p}})-(f(t_p)-f_{\set{t,t_p}})| \\ 
        &= |f(t)-f(t_p)|=|\ddiff_t f|(s),
    \end{align*}
    for $s=t$ or $s=t_p$. Henceforth, using the local Poincar\'e inequality in Lemma \ref{lem: local poincare}, H\"{o}lder's inequality and Corollary \ref{cor: sum||f_t||<=K||f||}, we can conclude that 
    \begin{align*}
        \norm{f}_{\ell^p(V,\mu)} &\le \sup_{\norm{g}_{\ell^q(V,\mu)}\le 1} \sum_{t\in V^*}\norm{|\ddiff_t f|}_{\ell^p(\set{t,t_p},\mu)}\norm{g_t}_{\ell^q(\set{t,t_p},\mu)} \notag\\ 
        &\le \sup_{\norm{g}_{\ell^q(V,\mu)}\le 1} \left(\sum_{t\in V^*}\norm{|\ddiff_t f|}_{\ell^p(\set{t,t_p},\mu)}^p\right)^{1/p}\left(\sum_{t\in V^*}\norm{g_t}_{\ell^q(\set{t,t_p},\mu)}^q\right)^{1/q} \notag\\ 
        &\le c M^{1/q}\left(\frac{2^q q}{q-1}\right)^{1/q}\left(\sum_{t\in V^*}\norm{|\ddiff_t f|}_{\ell^p(\set{t,t_p},\mu)}^p\right)^{1/p}\\ 
        &= c M^{1/q}\left(\frac{2^q q}{q-1}\right)^{1/q}\left(\sum_{t\in V^*}|f(t)-f(t_p)|^p\mu(t)+|f(t)-f(t_p)|^p\mu(t_p)\right)^{1/p} \\  
    &= c M^{1/q}\left(\frac{2^qq}{q-1}\right)^{1/q}\left(\sum_{k\in V} \sum_{s : s\sim_T k} |f(s)-f(k)|^p\mu(k)\right)^{1/p}\\ 
    &\le c M^{1/q}\left(\frac{2^qq}{q-1}\right)^{1/q}\left(\sum_{k\in V} \left(\sum_{s : s\sim_T k} |f(s)-f(k)|\right)^p \mu(k)\right)^{1/p}  \\ 
    &\leq  c M^{1/q}\left(\frac{2^qq}{q-1}\right)^{1/q}\left(\sum_{k\in V} \left(|\ddiff f|(k)\right)^p \mu(k)\right)^{1/p}. 
    \end{align*}
Recall that $s\sim_T k$ means that $s$ and $k$ are adjacent in the spanning tree $(V,E_T)$ that appears in Definition \ref{def: spanning tree}. The last estimate follows from the fact that $s\sim_T k$ implies $s\sim k$. Actually, we prove a slightly stronger result where the length of the gradient in the original graph $(V,E)$ is replaced by that on the spanning tree $(V,E_T)$, which is smaller since $E_T\subseteq E$. Hence, this completes the proof for $1<p<\infty$.

Finally, consider the case $p=1$. We begin as we did in the case $p>1$ and estimate $\norm{f}_{\ell^1(V,\mu)}$ by a dual argument. That is, consider the following
    \begin{equation*}
        \norm{f}_{\ell^1(V,\mu)} = \sup_{\norm{g}_{\ell^\infty(V)}\le 1} \sum_{s\in V} f(s)g(s)\mu(s) = \sup_{\norm{g}_{\ell^\infty(V)}\le 1} \sum_{s\in V} f(s)(g(s)-g_V)\mu(s), 
    \end{equation*}
    where, similarly to the first case, H\"{o}lder's inequality and the fact that $f$ sums zero with respect to $\mu$ on $V$ imply that the two sums above are equal. Observe that $g-g_V$ sums zero with respect to $\mu$ on $V$ as in the previous case. Thus, according to Theorem \ref{thm: decomp of f}, we can decompose $g-g_V$ following (\ref{eq: def of decomp}) and writing $g_t$ for simplicity. Hence, we have
    \begin{equation*}
        \norm{f}_{\ell^1(V,\mu)} = \sup_{\norm{g}_{\ell^\infty(V)}\le 1} \sum_{s\in V} f(s) \left(\sum_{t\in V^*} g_t(s)\right)\mu(s) = \sup_{\norm{g}_{\ell^\infty(V)}\le 1} \sum_{s\in V} \sum_{t\in V^*} f(s)g_t(s)\mu(s).
    \end{equation*}
    Next, since the double sum converges absolutely (as in the case $p>1$), we may interchange the order of summation. That is,
    \begin{align*}
        \norm{f}_{\ell^1(V,\mu)} &= \sup_{\norm{g}_{\ell^\infty(V)}\le 1} \sum_{t\in V^*} \sum_{s\in V} f(s)g_t(s)\mu(s)\\
        &= \sup_{\norm{g}_{\ell^\infty(V)}\le 1} \sum_{t\in V^*} \sum_{\substack{s : s=t \text{ or }\\ s=t_p}} f(s)g_t(s)\mu(s) \\
        &= \sup_{\norm{g}_{\ell^\infty(V)}\le 1} \sum_{t\in V^*} \sum_{\substack{s : s=t \text{ or }\\ s=t_p}} (f(s)-f_{\set{t,t_p}})g_t(s)\mu(s) \\
        &\le \sup_{\norm{g}_{\ell^\infty(V)}\le 1} \sum_{t\in V^*} \norm{f-f_{\set{t,t_p}}}_{\ell^1(\set{t,t_p},\mu)}\norm{g_t}_{\ell^\infty(\set{t,t_p})},
    \end{align*}
    where the third line holds since $g_t$ sums zero with respect to $\mu$ on $V$ and thus on the segment $\set{t,t_p}$. We now invoke Lemma \ref{lem: local poincare} to continue this string of inequalities by writing
    \begin{align*}
        \norm{f}_{\ell^1(V,\mu)} &\leq \sup_{\norm{g}_{\ell^\infty(V)}\le 1} \sum_{t\in V^*} \norm{|\ddiff_t f |}_{\ell^1(\set{t,t_p},\mu)}\norm{g_t}_{\ell^\infty(V)} \\
        &\le \sup_{\norm{g}_{\ell^\infty(V)}\le 1} \left(\sum_{t\in V^*} \norm{|\ddiff_t f |}_{\ell^1(\set{t,t_p},\mu)}\right)\left(\sup_{t\in V^*} \norm{g_t}_{\ell^\infty(V)}\right).
    \end{align*}
    Before we proceed, we must estimate $\sup_{t\in V^*} \norm{g_t}_{\ell^\infty(V)}$, where $g_t$ arises from the decomposition
of $g-g_V$. We note that $|g_t(s)|\le c|T(g-g_V)(s)|$ for all $s\in V$. Hence,
    \begin{equation*}
        \norm{g_t}_{\ell^\infty(V)}\le c\norm{T(g-g_V)}_{\ell^\infty(V)}\le c\norm{g-g_V}_{\ell^\infty(V)} \le c(\norm{g}_{\ell^\infty(V)} + \norm{g_V}_{\ell^\infty(V)}),
    \end{equation*}
    where the second inequality follows from Theorem \ref{thm: Hardy strong (infty,infty) cont}, the strong $(\infty, \infty)$ boundedness of the Hardy operator $T$. Furthermore,
    \begin{equation*}
        \norm{g_t}_{\ell^\infty(V)}\le c\left(\norm{g}_{\ell^\infty(V)} + \frac{1}{\mu(V)}\sum_{s\in V}g(s)\mu(s)\right) \le c(2\norm{g}_{\ell^\infty(V)}).
    \end{equation*}
    This holds for all $t\in V^*$, so we adjust our estimate of $\norm{f}_{\ell^1(V,\mu)}$ as follows
    \begin{align*}
        \norm{f}_{\ell^1(V,\mu)} &\le c\sup_{\norm{g}_{\ell^\infty(V)}\le 1} \left(\sum_{t\in V^*} \norm{|\ddiff_t f|}_{\ell^1(\set{t,t_p},\mu)}\right)\left(2\norm{g}_{\ell^\infty(V)}\right)\\
        &\le 2c \sum_{t\in V^*} \left(|\ddiff_t f|(t)\mu(t) + |\ddiff_t f|(t_p)\mu(t_p)\right)\\
        &= 2c \sum_{t\in V^*} \left(|f(t)-f(t_p)|\mu(t) + |f(t_p)-f(t)|\mu(t_p)\right)\\
        &= 2c \sum_{k\in V}\sum_{s : s\sim_T k} |f(s)-f(k)| \mu(k)\\ 
        &\leq 2c \sum_{k\in V}\sum_{s : s\sim k} |f(s)-f(k)| \mu(k)\\ 
        &= 2c\sum_{t\in V} |\ddiff f(t)|\mu(t) = 2c\norm{|\ddiff f|}_{\ell^1(V,\mu)}.
    \end{align*}
    Hence, $C_P\le 2c$ in the special case when $p=1$, and the proof is complete.
\end{proof}

\begin{remark}
Notice that the weighted $k$-ary trees described in Example \ref{ex: k-ary tree is john}, where the weight depends on $\alpha<1/k$, satisfy the condition in Definition \ref{def: John domain} with constant $\frac{1}{1-k\alpha}$. Therefore, we can assert that the global $\ell^p(V,\mu)$-Poincar\'e inequality stated in Theorem \ref{thm: Poincare} is satisfied for $1\le p < \infty$ and the constant can be estimated by  
\begin{equation*}
C_P \le \frac{2\,\left((k+1)\,p\right)^{1-1/p}}{1-k\alpha}.
\end{equation*}
\end{remark}

\begin{remark} Theorem \ref{thm: Poincare} is shown in \cite[Lemma 2.2]{AO}, where the vertices of the graph are the natural numbers and consecutive numbers are adjacent.  This version of the discrete Poincar\'e inequality was used to establish the validity of a certain weighted version of Korn's inequality on bounded domains in $\R^n$ with a boundary singularity.
\end{remark}

\begin{remark} A natural problem arising from this theorem is to determine whether the validity of the Poincar\'e inequality stated in \eqref{eq: global poincare} implies \eqref{eq: John constant}. A related question was studied in the continuous case by S. Buckley and P. Koskela in \cite{BK}, who proved that, under general geometric assumptions on domains in $\R^n$, the validity of the Sobolev–Poincaré inequality implies that the domain is a John domain.
\end{remark}

\section*{Acknowledgements}
The first author gratefully acknowledges Prof. Ojea, the Departamento de Matem\'atica of the Universidad de Buenos Aires, and the Instituto de Investigaciones Matem\'aticas Luis A.~Santal\'o (IMAS, CONICET--UBA) for their warm hospitality during his 2025 sabbatical visit, when this manuscript was partially written. 

In addition, the authors thank the anonymous referee for the careful reading and valuable suggestions that helped improve the presentation of the manuscript, particularly the main result, Theorem \ref{thm: Poincare}.

 \bibliographystyle{plain}
 \bibliography{references.bib}
\end{document}